\documentclass{birkjour}
\usepackage[utf8]{inputenc}
\usepackage{amssymb, mathtools, hyperref, xcolor, mathrsfs}

\usepackage[backend=biber]{biblatex}
\addbibresource{parameter.bib}

\numberwithin{equation}{section}
\newtheorem{thm}{Theorem}[section]
\newtheorem{prop}[thm]{Proposition}
\newtheorem{lem}[thm]{Lemma}

\theoremstyle{definition}

\newcommand{\nat}{\mathbb{N}}
\newcommand{\real}{\mathbb{R}}


\newcommand{\Pee}{\mathcal{P}}
\newcommand{\NP}{\mathscr{P}}

\newcommand{\I}{\widetilde{\mathcal{I}}}
\newcommand{\K}{\mathcal{K}}

\newcommand{\dif}{\mathrm{d}}

\DeclarePairedDelimiter{\abs}{\lvert}{\rvert}
\DeclarePairedDelimiter{\norm}{\lVert}{\rVert}
\DeclarePairedDelimiter{\parens}{(}{)}
\DeclarePairedDelimiter{\set}{\{}{\}}

\DeclarePairedDelimiter{\angles}{\langle}{\rangle}

\DeclarePairedDelimiter{\coi}{\lbrack}{\lbrack}
\DeclarePairedDelimiter{\oci}{\rbrack}{\rbrack}
\DeclarePairedDelimiter{\ooi}{\rbrack}{\lbrack}

\title[Asymptotic profile of least energy solutions to the nonlinear SBP system]{\sloppy Asymptotic profile of least energy solutions to the nonlinear Schrödinger--Bopp--Podolsky system}
\begin{document}

\author{Gustavo de Paula Ramos}
\address{Instituto de Matemática e Estatística, Universidade de São Paulo, Rua do Matão, 1010, 05508-090 São Paulo SP, Brazil}
\email{gpramos@ime.usp.br}
\urladdr{http://gpramos.com}

\begin{abstract}
Consider the following nonlinear Schrödinger--Bopp--Podolsky system in $\real^3$:
\[
\begin{cases}
- \Delta v + v + \phi v = v \abs{v}^{p - 2};
\\
\beta^2 \Delta^2 \phi - \Delta \phi = 4 \pi v^2,
\end{cases}
\]
where $\beta > 0$ and $3 < p < 6$, the unknowns being
$v$, $\phi \colon \real^3 \to \real$.
We prove that, as
$\beta \to 0$ and up to translations and subsequences, least energy solutions to this system converge to a least energy solution to the following nonlinear Schrödinger--Poisson system in
$\real^3$:
\[
\begin{cases}
- \Delta v + v + \phi v = v \abs{v}^{p - 2};
\\
- \Delta \phi = 4 \pi v^2.
\end{cases}
\]

\smallskip
\sloppy \noindent \textbf{Keywords.} 
Schrödinger--Bopp--Podolsky system, Schrödinger--Poisson system, Nonlocal semilinear elliptic problem, Variational methods, Ground state, Nehari--Pohožaev manifold, Concentration-compactness.
\end{abstract}

\date{\today}
\maketitle

\section{Introduction}

We are interested in the asymptotic profile of solutions to the following \emph{nonlinear Schrödinger--Bopp--Podolsky (SBP) system} in
$\real^3$ as $\beta \to 0^+$:
\begin{equation}
\label{eqn:nonreduced_SBP}
\begin{cases}
- \Delta v + v + \phi v = v \abs{v}^{p - 2};
\\
\beta^2 \Delta^2 \phi - \Delta \phi = 4 \pi v^2,
\end{cases}
\end{equation}
where $3 < p < 6$ and we want to solve for
$v, \phi \colon \real^3 \to \real$.

The nonlinear SBP system was introduced in the mathematical literature a few years ago by d'Avenia \& Siciliano in \cite{daveniaNonlinearSchrodingerEquation2019}, where they established existence/nonexistence results of solutions to the following system in $\real^3$ in function of the parameters $p$, $q \in \real$:
\begin{equation}
\label{eqn:SBP_dS}
\begin{cases}
- \Delta v + \omega v + q^2 \phi v = v \abs{v}^{p - 2};
\\
\beta^2 \Delta^2 \phi - \Delta \phi = 4 \pi v^2,
\end{cases}
\end{equation}
where $\beta$, $\omega > 0$. As for the physical meaning of this system, if $v$, $\phi \colon \real^3 \to \real$ solve \eqref{eqn:SBP_dS}, then $v$ describes the spatial profile of a standing wave
\[\psi \parens{x, t} := e^{\mathrm{i} \omega t} v \parens{x}\]
that solves the system obtained by the minimal coupling of the Nonlinear Schrödinger Equation with the Bopp--Podolsky electromagnetic theory and $\phi$ denotes the ensuing electric potential (for more details, see \cite[Section 2]{daveniaNonlinearSchrodingerEquation2019}). Since then, there has been an increasing number of studies about systems related to \eqref{eqn:SBP_dS}. For instance,
\cite{chenCriticalSchrodingerBopp2020, chenGroundStateSolutions2022, jiaGroundStatesSolutions2022, liGroundStateSolutions2020, liuExistenceAsymptoticBehaviour2022, xiaoExistenceGroundState2023, zhuSchrodingerBoppPodolsky2021}
addressed the existence of least energy solutions,
\cite{depaularamosExistenceLimitBehavior2023, liCriticalSchrodingerBopp2023, liNormalizedSolutionsSobolev2023} considered the mass-constrained problem,
\cite{figueiredoMultipleSolutionsSchrodinger2023a, huExistenceLeastEnergySignChanging2023a, lixiongwangExistenceMultiplicitySignchanging2022, zhangSignchangingSolutionsClass2024, zhangSignchangingSolutionsSchrodinger2022}
obtained sign-changing solutions and 
\cite{damianCriticalSchrodingerBopp2024, depaularamosConcentratedSolutionsSchrodinger2024}
considered semiclassical states.

As for the asymptotic behavior as $\beta \to 0$, it is already known that solutions to a number of problems related to \eqref{eqn:nonreduced_SBP} converge to solutions of the respective system obtained by formally considering
$\beta = 0$. For instance, \cite[Theorem 1.3]{daveniaNonlinearSchrodingerEquation2019} proved such a result for radial solutions; \cite[Theorem D]{depaularamosExistenceLimitBehavior2023} extended this conclusion for least energy solutions to the mass-constrained system for
$2 < p < 14 / 5$ and a sufficiently small mass $\rho$ (notice that these solutions are also radial due to \cite[Theorem C]{depaularamosExistenceLimitBehavior2023}); \cite[Theorem 1.3]{sorianohernandezExistenceAsymptoticBehavior2023} showed that solutions to the associated eigenvalue problem in a bounded smooth domain also have such an asymptotic profile and, more recently, \cite[Theorem 1.7]{damianCriticalSchrodingerBopp2024} verified such a behavior for the critical nonlinear SBP system in the semiclassical regime under the effect of an external effective potential
$V \colon \real^3 \to \coi{0, \infty}$ which vanishes at a point
$x_0 \in \real^3$.

Before explaining our contribution, let us introduce the necessary variational framework. The function
$\K_\beta \colon \real^3 \setminus \set{0} \to \ooi{0, 1 / \beta}$
defined as
\[
\K_\beta \parens{x}
:=
\frac{1}{\abs{x}}
\parens*{1 - e^{- \abs{x} / \beta}}
\]
is a fundamental solution to
$\parens{4 \pi}^{- 1} \parens{\beta^2 \Delta^2 - \Delta}$, so
$u^2 \ast \K_\beta$ solves
\[
\beta^2 \Delta^2 \phi - \Delta \phi = 4 \pi u^2
\]
in the sense of distributions. As such, we are lead to consider the following \emph{nonlinear SBP equation} in $\real^3$:
\begin{equation}
\label{eqn:SBP}
- \Delta v + v + \parens{v^2 \ast \K_\beta} v
=
v \abs{v}^{p - 2}.
\end{equation}
We say that $v$ is a \emph{least energy solution} to \eqref{eqn:SBP} when it solves the minimization problem
\[
\I_\beta \parens{u}
=
\inf \set*{
	\I_\beta \parens{v} :
	v \in H^1 \setminus \set{0}
	\quad \text{and} \quad
	\I_\beta' \parens{v} = 0
};
\quad
u \in H^1,
\]
where the \emph{energy functional} $\I_\beta \colon H^1 \to \real$ is defined as
\[
\I_\beta \parens{v}
=
\frac{1}{2} \norm{v}_{H^1}^2
+
\frac{1}{4}
\int
	\parens{v^2 \ast \K_\beta} \parens{x} v \parens{x}^2
\dif x
-
\frac{1}{p} \norm{v}_{L^p}^p.
\]
For a proof that $\I_\beta$ is a well-defined functional of class $C^1$ and a rigorous discussion about the relationship between \eqref{eqn:nonreduced_SBP} and \eqref{eqn:SBP}, we refer the reader to \cite[Section 3.2]{daveniaNonlinearSchrodingerEquation2019}.

Given $x \in \real^3 \setminus \set{0}$,
$\K_\beta \parens{x} \to 1 / \abs{x}$
as $\beta \to 0$, so the formal limit equation obtained from \eqref{eqn:SBP} is the \emph{nonlinear Schrödinger--Poisson equation}
\begin{equation}
\label{intro:eqn:reduced_SP}
- \Delta v + v + \parens*{v^2 \ast \abs{\cdot}^{- 1}} v
=
v \abs{v}^{p - 2}.
\end{equation}
We similarly introduce a notion of least energy solution to \eqref{intro:eqn:reduced_SP} by considering the energy functional
$\I_0 \colon H^1 \to \real$ given by
\[
\I_0 \parens{v}
:=
\frac{1}{2} \norm{v}_{H^1}^2
+
\frac{1}{4}
\int \int
	\frac{v \parens{x}^2 v \parens{y}^2}{\abs{x - y}}
\dif x \dif y
-
\frac{1}{p} \norm{v}_{L^p}^p.
\]	
In this context, our main result is that, up to translations and subsequences, least energy solutions to \eqref{eqn:SBP} converge to a least energy solution to \eqref{intro:eqn:reduced_SP} as
$\beta \to 0$ when $3 < p < 6$.
\begin{thm}
\label{thm:asymptotic_profile}
Suppose that $3 < p < 6$ and given $\beta > 0$, $v_\beta$ is a least energy solution to \eqref{eqn:SBP}. It follows that if
$\beta_n \to 0$ as $n \to \infty$, then \eqref{intro:eqn:reduced_SP} has a least energy solution $\overline{v}_0$ and there exists
$\set{\xi_n}_{n \in \nat} \subset \real^3$ such that, up to subsequence,
$
\lim_{n \to \infty}
\norm{v_{\beta_n} \parens{\cdot + \xi_n} - \overline{v}_0}_{H^1}
=
0
$.
\end{thm}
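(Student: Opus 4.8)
The plan is to work on the Nehari–Pohožaev manifold, which is the natural constraint for this class of problems since the nonlocal term scales differently from the others. First I would recall (or establish) that for each $\beta \geq 0$, least energy solutions to \eqref{eqn:SBP} coincide with minimizers of $\I_\beta$ restricted to the Nehari–Pohožaev manifold $\mathcal{M}_\beta$, and I would denote the corresponding least energy level by $c_\beta := \inf_{\mathcal{M}_\beta} \I_\beta$. The first key step is a \emph{uniform bound}: since $\K_\beta \leq \abs{\cdot}^{-1}$ pointwise, one has $\I_\beta \geq \I_0$ on all of $H^1$ and in fact $\mathcal{M}_\beta$ interacts with $\mathcal{M}_0$ in a controlled way, which should give $\limsup_{n} c_{\beta_n} \leq c_0$ by testing with a fixed least energy solution of the Schrödinger–Poisson equation and using $\K_{\beta} \to \abs{\cdot}^{-1}$ in the relevant integrals (dominated convergence, since $0 \leq \K_\beta \leq \abs{\cdot}^{-1}$). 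Combined with a lower bound $\liminf_n c_{\beta_n} \geq c_0$, which requires more care and will be addressed below, this yields $c_{\beta_n} \to c_0$.

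Next I would extract qualitative information on the minimizers $v_{\beta_n}$. From membership in $\mathcal{M}_{\beta_n}$ together with $c_{\beta_n} \to c_0$, standard manipulations of the Nehari–Pohožaev identity show $\set{v_{\beta_n}}$ is bounded in $H^1$ and moreover bounded away from $0$ in $L^p$ (non-vanishing of the norms), so the sequence does not vanish in the sense of concentration-compactness. I would then run the concentration-compactness dichotomy on the (rescaled, if needed) sequence: vanishing is excluded by the $L^p$ lower bound, and dichotomy is excluded by the strict subadditivity of $\beta \mapsto c_\beta$-type inequalities — more precisely, by the fact that splitting mass strictly increases the energy because of the attractive nonlocal term and the superquadratic nonlinearity. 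Hence, after translating by suitable $\xi_n \in \real^3$, the sequence $w_n := v_{\beta_n}(\cdot + \xi_n)$ is compact: $w_n \rightharpoonup \overline{v}_0$ weakly in $H^1$ with $\overline{v}_0 \neq 0$, and I would upgrade this to strong convergence in $H^1$ by comparing energies. The point is that the nonlocal energy term passes to the limit (using $\K_{\beta_n} \to \abs{\cdot}^{-1}$, a splitting of the convolution integral over bounded and unbounded regions, and the boundedness of $w_n$ in $L^{12/5}$, the natural exponent for the Coulomb term in $\real^3$), so $\liminf_n \I_{\beta_n}(w_n) \geq \I_0(\overline{v}_0) \geq c_0$; together with $\I_{\beta_n}(w_n) = c_{\beta_n} \to c_0$ this forces $\I_0(\overline{v}_0) = c_0$ and no loss of mass, giving $w_n \to \overline{v}_0$ strongly and simultaneously proving $\overline{v}_0$ is a least energy solution to \eqref{intro:eqn:reduced_SP}. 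This last comparison also closes the circle by furnishing the lower bound $\liminf_n c_{\beta_n} \geq c_0$ left open above.

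I expect the main obstacle to be the concentration-compactness step, and specifically ruling out dichotomy uniformly in $n$: because the functional itself is varying with $\beta_n$, the usual subadditivity argument for a fixed functional must be replaced by one that is stable under the perturbation $\K_{\beta_n} \to \abs{\cdot}^{-1}$. The technical heart is controlling cross terms of the form $\int (u_n^2 \ast \K_{\beta_n}) v_n^2$ where $u_n$, $v_n$ have asymptotically disjoint supports drifting apart: one must show these cross terms do not vanish too fast (so that separation is genuinely penalized) while the diagonal terms converge to their Schrödinger–Poisson counterparts. A secondary difficulty is that the Nehari–Pohožaev projection depends on $\beta$, so one must check that projecting a fixed competitor onto $\mathcal{M}_{\beta_n}$ costs only $o(1)$ in energy as $n \to \infty$; this follows from continuity of the projection in the parameter, but needs to be stated carefully because the relevant scaling parameter is the positive root of a $\beta$-dependent polynomial and one must verify it stays in a compact subset of $(0,\infty)$.
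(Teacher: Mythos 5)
Your overall strategy (Nehari--Pohožaev least energy levels, the upper bound $\limsup_{\beta\to 0} m_\beta \le m_0$ by testing with a projection of a fixed Schrödinger--Poisson ground state, the uniform $H^1$ bound and the uniform $L^p$ non-vanishing on the manifolds) coincides with the paper's up to the point where you propose to run the concentration-compactness dichotomy directly on the sequence $v_{\beta_n}$ with the $\beta_n$-dependent functionals. That is where the genuine gap lies: you yourself flag the exclusion of dichotomy for a \emph{varying} functional --- the stability of strict subadditivity under the perturbation $\K_{\beta_n}\to\abs{\cdot}^{-1}$ and the control of cross terms $\int (u_n^2\ast\K_{\beta_n})\,v_n^2$ for supports drifting apart --- as ``the technical heart'', but you do not supply that argument, and it is the hardest step of your plan. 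As written, the proposal asserts rather than proves the non-dichotomy, and also leaves the lower bound $\liminf_n c_{\beta_n}\ge c_0$ contingent on that same unproved step.

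The paper avoids this difficulty with one extra device that your proposal is missing. Since $v_\beta\in\NP_\beta$ and $\K_\beta\le\abs{\cdot}^{-1}$, one has $\Pee_0\parens{v_\beta}>0$, hence there is a unique $t_\beta>1$ with $t_\beta^2 v_\beta\parens{t_\beta\cdot}\in\NP_0$; using the uniform $H^1$ bound, the uniform $L^p$ lower bound, and the quantitative estimate that the exponential correction term is $O\parens{\beta^2\norm{v_\beta}_{L^4}^4}$, one shows $t_\beta\to 1$ and $\I_0\parens{t_\beta^2 v_\beta\parens{t_\beta\cdot}}\to m_0$. The projected sequence is then a minimizing sequence for the \emph{fixed} functional $\I_0|_{\NP_0}$, so the entire concentration-compactness analysis can be quoted from the known result for the limit problem (Azzollini--Pomponio), and the lower bound you defer comes for free from $m_0\le\I_0\parens{t_\beta^2 v_\beta\parens{t_\beta\cdot}}$. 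To complete your proposal you should either carry out the varying-functional dichotomy exclusion in full or adopt this projection-onto-$\NP_0$ reduction. A small additional slip: $\K_\beta\le\abs{\cdot}^{-1}$ gives $\I_\beta\le\I_0$ on $H^1$, not $\I_\beta\ge\I_0$ as you wrote; this is harmless for the upper bound on $c_\beta$, which in fact uses the correct direction.
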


We prove the theorem by arguing as in Liu \& Moroz' \cite{liuAsymptoticProfileGround2022}, where they characterized the asymptotic profile of least energy solutions to the following Schrödinger--Poisson equation in $\real^3$ as $\lambda \to \infty$:
\[
-
\Delta v
+
v
+
\frac{\lambda}{4 \pi} \parens*{v^2 \ast \abs{\cdot}^{- 1}} v
=
v \abs{v}^{p - 2},
\]
where $3 < p < 6$. Let us summarize the strategy of the proof. It is already known that when $3 < p < 6$, least energy solutions to \eqref{eqn:SBP} and \eqref{intro:eqn:reduced_SP} are minimizers of the respective energy functionals in the associated Nehari--Pohožaev manifolds (see \cite{chenGroundStateSolutions2022} for the SBP system and \cite{azzolliniGroundStateSolutions2008, ruizSchrodingerPoissonEquation2006} for the Schrödinger--Poisson system). As such, the core of the proof consists in comparing the least energy level achieved on these manifolds as $\beta \to 0$.

Let us finish the introduction with a comment on the organization of the text. In Section \ref{prelim}, (i) we recap relevant results present in the literature; (ii) we precisely define the Nehari--Pohožaev manifold and (iii) we recall its properties which we will use. Finally, we prove Theorem \ref{thm:asymptotic_profile} in Section \ref{asymptotic}.

\subsection*{Notation}

Unless denoted otherwise, functional spaces contain real-valued functions defined a.e. in $\real^3$. Likewise, we integrate over $\real^3$ whenever the domain of integration is omitted. We define $D^{1, 2}$ as the Hilbert space obtained as completion of $C_c^\infty$ with respect to the inner product
$
\angles{u, v}_{D^{1, 2}}
:=
\int \nabla u \parens{x} \cdot \nabla v \parens{x} \dif x
$.
In the following sections, we always consider a fixed
$p \in \ooi{3, 6}$.

\section{Preliminaries}
\label{prelim}

We begin by recalling the following Brézis--Lieb-type splitting property (see \cite[Lemma 2.2 (i)]{zhaoExistenceSolutionsSchrodinger2008} or \cite[Proposition 4.7]{mercuriGroundstatesRadialSolutions2016}).
\begin{lem}
\label{lem:BL}
If $w_n \rightharpoonup \overline{v}_0$ in $H^1$ and
$w_n \to \overline{v}_0$ a.e. as $n \to \infty$, then
\begin{multline*}
\int \int
	\frac{
		w_n \parens{x}^2 w_n \parens{y}^2
	}{\abs{x - y}}
\dif x \dif y
-
\\
-
\int \int
	\frac{
		\parens*{w_n \parens{x} - \overline{v}_0 \parens{x}}^2
		\parens*{w_n \parens{y} - \overline{v}_0 \parens{y}}^2
	}{\abs{x - y}}
\dif x \dif y
\xrightarrow[n \to \infty]{}
\\
\xrightarrow[n \to \infty]{}
\int \int
	\frac{
		\overline{v}_0 \parens{x}^2 \overline{v}_0 \parens{y}^2
	}{\abs{x - y}}
\dif x \dif y.
\end{multline*}
\end{lem}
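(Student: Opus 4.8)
The plan is to linearize the nonlocal quantity and estimate the cross terms it produces. Introduce the symmetric bilinear form
\[
B \parens{f, g}
:=
\int \int \frac{f \parens{x} g \parens{y}}{\abs{x - y}} \dif x \dif y,
\]
so that the quantity at stake is $D \parens{u} := B \parens{u^2, u^2}$ and the assertion reads $D \parens{w_n} - D \parens{w_n - \overline{v}_0} \to D \parens{\overline{v}_0}$. By the Hardy--Littlewood--Sobolev inequality, $\abs{B \parens{f, g}} \le C \norm{f}_{L^{6 / 5}} \norm{g}_{L^{6 / 5}}$; equivalently, $f \mapsto f \ast \abs{\cdot}^{- 1}$ maps $L^{6 / 5}$ boundedly into $L^6$ and $B \parens{f, g} = \int \parens{f \ast \abs{\cdot}^{- 1}} g$. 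Since $H^1 \hookrightarrow L^{12 / 5}$, the choice $f = u^2$ is legitimate for every $u \in H^1$. I would set $r_n := w_n - \overline{v}_0$, so that $r_n \rightharpoonup 0$ in $H^1$, $r_n \to 0$ a.e., and $\set{w_n}_{n \in \nat}$, $\set{r_n}_{n \in \nat}$ are bounded in $H^1 \hookrightarrow L^{12 / 5}$. Expanding $w_n^2 = \overline{v}_0^2 + 2 \overline{v}_0 r_n + r_n^2$ and using bilinearity and symmetry of $B$,
\begin{multline*}
D \parens{w_n} - D \parens{w_n - \overline{v}_0}
=
D \parens{\overline{v}_0}
+ 4 B \parens{\overline{v}_0 r_n, \overline{v}_0 r_n}
\\
+ 4 B \parens{\overline{v}_0^2, \overline{v}_0 r_n}
+ 2 B \parens{\overline{v}_0^2, r_n^2}
+ 4 B \parens{\overline{v}_0 r_n, r_n^2},
\end{multline*}
so it suffices to show that the last four terms tend to $0$.

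The workhorse is the claim that $\overline{v}_0 r_n \to 0$ strongly in $L^{6 / 5}$. I would prove this by splitting $\real^3$ into a ball $B_R$ of radius $R$ centered at the origin and its complement. On $B_R^c$, Hölder's inequality gives $\norm{\overline{v}_0 r_n}_{L^{6 / 5} \parens{B_R^c}} \le \norm{\overline{v}_0}_{L^{12 / 5} \parens{B_R^c}} \sup_n \norm{r_n}_{L^{12 / 5}}$, which is as small as we wish for $R$ large, uniformly in $n$, because $\overline{v}_0 \in L^{12 / 5}$ and $\sup_n \norm{r_n}_{L^{12 / 5}} < \infty$; and for such a fixed $R$, $\norm{\overline{v}_0 r_n}_{L^{6 / 5} \parens{B_R}} \le \norm{\overline{v}_0}_{L^{12 / 5} \parens{B_R}} \norm{r_n}_{L^{12 / 5} \parens{B_R}} \to 0$ by the Rellich--Kondrachov theorem (note $12 / 5 < 6$). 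Granting this, the Hardy--Littlewood--Sobolev bound disposes of three of the four terms at once: $\abs{B \parens{\overline{v}_0 r_n, \overline{v}_0 r_n}} \le C \norm{\overline{v}_0 r_n}_{L^{6 / 5}}^2 \to 0$, $\abs{B \parens{\overline{v}_0^2, \overline{v}_0 r_n}} \le C \norm{\overline{v}_0^2}_{L^{6 / 5}} \norm{\overline{v}_0 r_n}_{L^{6 / 5}} \to 0$, and $\abs{B \parens{\overline{v}_0 r_n, r_n^2}} \le C \norm{\overline{v}_0 r_n}_{L^{6 / 5}} \norm{r_n}_{L^{12 / 5}}^2 \to 0$.

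The remaining term $B \parens{\overline{v}_0^2, r_n^2}$ carries no strong factor to exploit, and handling it is the main obstacle: a norm estimate will not do, and one must argue by weak--strong duality instead. Since $\set{r_n^2}_{n \in \nat}$ is bounded in $L^{6 / 5}$ and $r_n^2 \to 0$ a.e., we get $r_n^2 \rightharpoonup 0$ in $L^{6 / 5}$; and since $\overline{v}_0^2 \ast \abs{\cdot}^{- 1} \in L^6$, the dual of $L^{6 / 5}$, it follows that $B \parens{\overline{v}_0^2, r_n^2} = \int \parens{\overline{v}_0^2 \ast \abs{\cdot}^{- 1}} r_n^2 \to 0$. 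Substituting the four limits into the displayed identity yields the lemma. The real content of the argument is thus that the two hypotheses together force $\overline{v}_0 r_n \to 0$ in $L^{6 / 5}$ and $r_n^2 \rightharpoonup 0$ in $L^{6 / 5}$, and the localization step — Rellich--Kondrachov on $B_R$ combined with the decay of $\overline{v}_0$ off $B_R$ — is exactly what rules out mass escaping to infinity. (One may also observe that in $\real^3$ one has $D \parens{u} = c \norm{u^2}_{\dot{H}^{- 1}}^2$ for some $c > 0$, so that $w_n^2 \rightharpoonup \overline{v}_0^2$ in $L^{6 / 5} \hookrightarrow \dot{H}^{- 1}$ already gives the splitting $D \parens{w_n} = D \parens{\overline{v}_0} + c \norm{w_n^2 - \overline{v}_0^2}_{\dot{H}^{- 1}}^2 + o \parens{1}$ for free from the Hilbert-space Brézis--Lieb identity; but identifying $c \norm{w_n^2 - \overline{v}_0^2}_{\dot{H}^{- 1}}^2$ with $D \parens{w_n - \overline{v}_0}$ up to $o \parens{1}$ still requires the same estimate on $\overline{v}_0 r_n$.)
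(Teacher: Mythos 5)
Your argument is correct: the bilinear expansion of $B\parens{w_n^2,w_n^2}-B\parens{r_n^2,r_n^2}$, the Hardy--Littlewood--Sobolev bound, the strong $L^{6/5}$ convergence of $\overline{v}_0 r_n$ via Rellich--Kondrachov plus tail-cutting, and the weak convergence $r_n^2\rightharpoonup 0$ in $L^{6/5}$ tested against $\overline{v}_0^2\ast\abs{\cdot}^{-1}\in L^6$ together dispose of all four cross terms. The paper does not prove this lemma but cites it from the literature, and your proof is essentially the standard argument given in those references, so nothing further is needed.
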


The Pohožaev-type identities in the sequence were proved in \cite[Appendix A.3]{daveniaNonlinearSchrodingerEquation2019} and \cite[Theorem 2.2]{ruizSchrodingerPoissonEquation2006}.

\begin{prop}
\begin{enumerate}
\item
If $v \in H^1$ is a weak solution to \eqref{eqn:SBP}, then
\begin{multline}
\label{eqn:Pohozaev:SBP}
\frac{1}{2} \norm{v}_{D^{1, 2}}^2
+
\frac{3}{2} \norm{v}_{L^2}^2
+
\frac{5}{4}
\int \int
	\K_\beta \parens{x - y} v \parens{x}^2 v \parens{y}^2
\dif x \dif y
+
\\
+
\frac{1}{4 \beta}
\int \int
	e^{- \abs{x - y} / \beta} v \parens{x}^2 v \parens{y}^2
\dif x \dif y
-
\frac{3}{p} \norm{v}_{L^p}^p
=
0.
\end{multline}
\item
If $v \in H^1$ is a weak solution to \eqref{intro:eqn:reduced_SP}, then
\[
\frac{1}{2} \norm{v}_{D^{1, 2}}^2
+
\frac{3}{2} \norm{v}_{L^2}^2
+
\frac{5}{4}
\int \int
	\frac{v \parens{x}^2 v \parens{y}^2}{\abs{x - y}}
\dif x \dif y
-
\frac{3}{p} \norm{v}_{L^p}^p
=
0.
\]
\end{enumerate}
\end{prop}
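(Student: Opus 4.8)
The plan is to obtain both identities as Rellich--Pohožaev identities, that is, by testing the equations against the infinitesimal generator of dilations $x \cdot \nabla v$. Concretely, for a weak solution $v$ I would introduce the rescaled family $v_t \parens{x} := v \parens{x / t}$, $t > 0$, and compute $\frac{\dif}{\dif t}\big|_{t = 1} \I_\beta \parens{v_t}$ in two ways. On one hand, since $\frac{\dif}{\dif t}\big|_{t = 1} v_t = - x \cdot \nabla v$, the chain rule gives $\frac{\dif}{\dif t}\big|_{t = 1} \I_\beta \parens{v_t} = \I_\beta' \parens{v} \brackets{- x \cdot \nabla v} = 0$ because $\I_\beta' \parens{v} = 0$; on the other hand, differentiating term by term reproduces the left-hand side of \eqref{eqn:Pohozaev:SBP}. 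The first task is therefore a regularity and decay step: by elliptic bootstrap --- using that $\K_\beta$ is bounded, so the potential $v^2 \ast \K_\beta$ is bounded and continuous, and that the nonlinearity is subcritical --- one shows that $v$ is smooth and that $v$ and $\nabla v$ decay exponentially. This guarantees $x \cdot \nabla v \in H^1$ and legitimizes both the chain rule above and the integrations by parts below.

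Granting this, I would evaluate each term of $\I_\beta \parens{v_t}$ as a function of $t$. The substitution $x = t \xi$ yields $\norm{v_t}_{D^{1, 2}}^2 = t \norm{v}_{D^{1, 2}}^2$, $\norm{v_t}_{L^2}^2 = t^3 \norm{v}_{L^2}^2$ and $\norm{v_t}_{L^p}^p = t^3 \norm{v}_{L^p}^p$, whose derivatives at $t = 1$ are $\norm{v}_{D^{1, 2}}^2$, $3 \norm{v}_{L^2}^2$ and $3 \norm{v}_{L^p}^p$; combined with the prefactors $\tfrac12$, $\tfrac12$, $-\tfrac1p$ appearing in $\I_\beta$, these account for the terms $\tfrac12 \norm{v}_{D^{1, 2}}^2$, $\tfrac32 \norm{v}_{L^2}^2$ and $-\tfrac3p \norm{v}_{L^p}^p$ of \eqref{eqn:Pohozaev:SBP}.

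The crux is the nonlocal term. With $z = \xi - \eta$, the same substitution turns the double integral into $t^6 \int \int \K_\beta \parens{t z} v \parens{\xi}^2 v \parens{\eta}^2 \dif \xi \dif \eta$, so its derivative at $t = 1$ hinges on $\frac{\dif}{\dif t}\big|_{t = 1} \K_\beta \parens{t z}$. Because $\K_\beta$ is \emph{not} homogeneous, a direct computation gives
\[
\frac{\dif}{\dif t}\bigg|_{t = 1} \K_\beta \parens{t z}
= z \cdot \nabla \K_\beta \parens{z}
= - \K_\beta \parens{z} + \frac{e^{- \abs{z} / \beta}}{\beta};
\]
together with the factor $6$ from the Jacobian $t^6$, the expression inside the integral becomes $5 \K_\beta \parens{z} + \beta^{-1} e^{- \abs{z} / \beta}$, and the prefactor $\tfrac14$ then recovers exactly the two remaining terms $\tfrac54 \int \int \K_\beta \parens{x - y} v \parens{x}^2 v \parens{y}^2$ and $\tfrac1{4 \beta} \int \int e^{- \abs{x - y} / \beta} v \parens{x}^2 v \parens{y}^2$ of \eqref{eqn:Pohozaev:SBP}. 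Summing all contributions and using $\frac{\dif}{\dif t}\big|_{t = 1} \I_\beta \parens{v_t} = 0$ yields (i). Equivalently, one may skip the scaling, multiply \eqref{eqn:SBP} by $x \cdot \nabla v$, integrate, and symmetrize the nonlocal double integral in $\parens{x, y}$; the same identity $z \cdot \nabla \K_\beta \parens{z} = - \K_\beta \parens{z} + \beta^{-1} e^{- \abs{z} / \beta}$ is what produces the extra term $\tfrac1{4 \beta} \int \int e^{- \abs{x - y} / \beta} v \parens{x}^2 v \parens{y}^2$, the analytic signature of the Bopp--Podolsky kernel.

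Part (ii) follows by the identical argument with $\K_\beta$ replaced by the homogeneous kernel $\abs{\cdot}^{- 1}$: here $z \cdot \nabla \parens{\abs{z}^{- 1}} = - \abs{z}^{- 1}$ by Euler's relation, so no extra term survives and the nonlocal contribution reduces to $\tfrac54 \int \int \frac{v \parens{x}^2 v \parens{y}^2}{\abs{x - y}}$, all other terms being unchanged. I expect the main obstacle to be precisely the regularity and decay step needed to justify that $x \cdot \nabla v$ is an admissible test function and that no boundary term survives at infinity. In practice this is handled either through exponential-decay estimates for $H^1$ solutions of subcritical Schrödinger-type equations or, more robustly, by running the multiplier computation against the truncated field $\psi \parens{x / R} \parens{x \cdot \nabla v}$ for a cutoff $\psi$ and letting $R \to \infty$, the non-homogeneity of $\K_\beta$ being the only genuinely new feature relative to the classical Schrödinger--Poisson computation.
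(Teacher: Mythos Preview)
Your argument is correct and is precisely the standard derivation of Pohožaev-type identities via the dilation $v_t \parens{x} = v \parens{x / t}$; the key computation $z \cdot \nabla \K_\beta \parens{z} = - \K_\beta \parens{z} + \beta^{-1} e^{- \abs{z} / \beta}$ is exactly what singles out the extra Bopp--Podolsky term, and the remaining scaling exponents are right. Note, however, that the paper does not prove this proposition at all: it simply quotes part (i) from \cite[Appendix A.3]{daveniaNonlinearSchrodingerEquation2019} and part (ii) from \cite[Theorem 2.2]{ruizSchrodingerPoissonEquation2006}. Those references carry out essentially the same multiplier computation you outline (with the cutoff-and-limit justification you mention for the boundary terms), so your proposal is consistent with the literature the paper relies on, only more explicit than the paper itself.
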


Let $\Pee_\beta \colon H^1 \to \real$ be defined as
\begin{multline*}
\Pee_\beta \parens{v}
=
\frac{3}{2} \norm{v}_{D^{1, 2}}^2
+
\frac{1}{2} \norm{v}_{L^2}^2
+
\frac{3}{4}
\int \int
	\K_\beta \parens{x - y} v \parens{x}^2 v \parens{y}^2
\dif x \dif y
-
\\
-
\frac{1}{4 \beta}
\int \int
	e^{- \abs{x - y} / \beta}
	u \parens{x}^2 u \parens{y}^2
\dif x \dif y
-
\frac{2 p - 3}{p} \norm{u}_{L^p}^p.
\end{multline*}
To motivate the definition of $\Pee_\beta$, notice that every critical point of $\I_\beta$ is an element of the Nehari--Pohožaev manifold
\[
\NP_\beta
:=
\set*{v \in H^1 \setminus \set{0} : \Pee_\beta \parens{v} = 0}.
\]
Indeed: if $\I_\beta' \parens{v} = 0$, then both the Nehari identity
\[
\norm{v}_{H^1}^2
+
\int
	\parens{v^2 \ast \K_\beta} \parens{x} v \parens{x}^2
\dif x
-
\norm{v}_{L^p}^p
=
0
\]
and the Pohožaev-type identity \eqref{eqn:Pohozaev:SBP} hold, so
$\Pee_\beta \parens{v} = 0$.

On one hand, it seems to be unknown whether $\NP_\beta$ is a \emph{natural constraint} of $\I_\beta$ in the sense that if $v$ is a critical point of $\I_\beta|_{\NP_\beta}$, then $\I_\beta' \parens{v} = 0$. On the other hand, under more general assumptions, Chen, Li, Rădulescu \& Tang proved in \cite[Lemma 3.14]{chenGroundStateSolutions2022} that if $v$ solves the minimization problem
\[
\I_\beta \parens{v}
=
m_\beta
:=
\inf_{u \in \NP_\beta} \I_\beta \parens{u};
\quad
v \in \NP_\beta,
\]
then $v$ is a least energy solution to \eqref{eqn:SBP}. Moreover, it follows from \cite[Corollary 1.6]{chenGroundStateSolutions2022} that $m_\beta$ is actually achieved and $m_\beta > 0$. As such, we will henceforth let $v_\beta$ denote any least energy solution to \eqref{eqn:SBP}.

Suppose that $v \in H^1 \setminus \set{0}$. There exists a unique
$\tau > 0$ such that $\tau^2 v \parens{\tau \cdot} \in \NP_\beta$, which is obtained as the unique critical point of the mapping
\begin{multline*}
\ooi{0, \infty} \ni t \mapsto
\I_\beta \parens*{t^2 v \parens{t \cdot}}
=
\\
=
\frac{t^3}{2} \norm{v}_{D^{1, 2}}^2
+
\frac{t}{2} \norm{v}_{L^2}^2
+
\frac{t^3}{4}
\int
	\parens{v^2 \ast \K_{t \beta}} \parens{x} v \parens{x}^2
\dif x
-
\frac{t^{2 p - 3}}{p} \norm{v}_{L^p}^p.
\end{multline*}
Furthermore, $\Pee_\beta \parens{t^2 v \parens{t \cdot}} > 0$ for
$0 < t < \tau$ and $\Pee_\beta \parens{t^2 v \parens{t \cdot}} < 0$ for $t > \tau$.

We let $\Pee_0 \colon H^1 \to \real$ be given by
\[
\Pee_0 \parens{v}
=
\frac{3}{2} \norm{v}_{D^{1, 2}}^2
+
\frac{1}{2} \norm{v}_{L^2}^2
+
\frac{3}{4}
\int \int
	\frac{v \parens{x}^2 v \parens{y}^2}{\abs{x - y}}
\dif x \dif y
-
\frac{2 p - 3}{p} \norm{v}_{L^p}^p
\]
and we analogously define $\NP_0$, $m_0$. As before, we can associate each $v \in H^1 \setminus \set{0}$ to a unique $\tau > 0$ such that
$\tau^2 v \parens{\tau \cdot} \in \NP_0$. It follows from Azzollini \& Pomponio's \cite[Theorem 1.1]{azzolliniGroundStateSolutions2008} that $m_0 > 0$ and \eqref{intro:eqn:reduced_SP} has a least energy solution obtained as a minimizer of $\I_0|_{\NP_0}$, so we will henceforth let $v_0$ denote any of these solutions. Let us recall a couple of properties of $\NP_0$ that follow directly from \cite[Lemma 2.3]{azzolliniGroundStateSolutions2008} and which will be important for us.

\begin{lem}
\label{lem:NP_0-is-natural-constraint}
\begin{enumerate}
\item
The Nehari--Pohožaev manifold $\NP_0$ is a natural constraint of
$\I_0$.
\item
$\inf_{v \in \NP_0} \norm{v}_{L^p} > 0$.
\end{enumerate}
\end{lem}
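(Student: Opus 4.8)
The plan is to read off both claims from the fibering map $t \mapsto \I_0\parens*{t^2 v\parens{t \cdot}}$ introduced above (these are exactly \cite[Lemma 2.3]{azzolliniGroundStateSolutions2008}). For part (2), I would start from the constraint itself: if $v \in \NP_0$, then $\Pee_0\parens{v} = 0$ reads
\[
\frac{2 p - 3}{p} \norm{v}_{L^p}^p
=
\frac{3}{2} \norm{v}_{D^{1, 2}}^2
+
\frac{1}{2} \norm{v}_{L^2}^2
+
\frac{3}{4}
\int \int \frac{v\parens{x}^2 v\parens{y}^2}{\abs{x - y}} \dif x \, \dif y
\geq
\frac{1}{2} \norm{v}_{H^1}^2,
\]
where I simply discard the nonnegative nonlocal term. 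Since $3 < p < 6$, the Sobolev embedding $H^1 \hookrightarrow L^p$ gives $S > 0$ with $\norm{v}_{L^p}^2 \leq S \norm{v}_{H^1}^2$, hence $\norm{v}_{L^p}^2 \leq \tfrac{2 S \parens{2 p - 3}}{p} \norm{v}_{L^p}^p$; dividing by $\norm{v}_{L^p}^2 > 0$ and using $p > 2$ yields the uniform bound $\norm{v}_{L^p} \geq \parens*{\tfrac{p}{2 S \parens{2 p - 3}}}^{1 / \parens{p - 2}}$. In particular $\NP_0$ stays at positive distance from $0$ in $H^1$.

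For part (1), I would first note that the key computation is not special to critical points. For \emph{any} $v \in \NP_0$, set $g\parens{t} := \I_0\parens*{t^2 v\parens{t \cdot}}$; the explicit scaling of each term gives $\Pee_0\parens*{t^2 v\parens{t \cdot}} = t\, g'\parens{t}$, so $g'\parens{1} = \Pee_0\parens{v} = 0$, and eliminating $\norm{v}_{L^p}^p$ via $\Pee_0\parens{v} = 0$ yields
\[
g''\parens{1}
=
3 \parens{3 - p} \norm{v}_{D^{1, 2}}^2
+
\frac{3 \parens{3 - p}}{2}
\int \int \frac{v\parens{x}^2 v\parens{y}^2}{\abs{x - y}} \dif x \, \dif y
-
\parens{p - 2} \norm{v}_{L^2}^2
<
0,
\]
the strict inequality holding because $3 < p < 6$ (so the first two coefficients are $\leq 0$) and $\norm{v}_{L^2} > 0$. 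Since $g''\parens{1} = \tfrac{\dif}{\dif t}\big|_{t = 1}\Pee_0\parens*{t^2 v\parens{t \cdot}} = \angles{\Pee_0'\parens{v},\, 2 v + x \cdot \nabla v}$, this shows $\Pee_0'\parens{v} \neq 0$ on all of $\NP_0$, so $\NP_0$ is a $C^1$ Banach submanifold of $H^1$ and the Lagrange multiplier rule applies. Now if $v \in \NP_0$ is a critical point of $\I_0|_{\NP_0}$, write $\I_0'\parens{v} = \mu\, \Pee_0'\parens{v}$ and pair with the scaling direction: $0 = g'\parens{1} = \angles{\I_0'\parens{v},\, 2 v + x \cdot \nabla v} = \mu\, g''\parens{1}$, whence $\mu = 0$ and $\I_0'\parens{v} = 0$, which is (1).

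The one point requiring care — and the only real obstacle — is justifying that $t \mapsto t^2 v\parens{t \cdot}$ is differentiable into $H^1$, equivalently that $2 v + x \cdot \nabla v$ is a legitimate test direction in the pairings above, since for a general $v \in H^1\parens{\real^3}$ the function $x \cdot \nabla v$ need not belong to $L^2$. I would handle this exactly as in \cite{azzolliniGroundStateSolutions2008, ruizSchrodingerPoissonEquation2006}: verify the identities first for $v \in C_c^\infty$, where the curve is manifestly smooth, and then pass to the limit using that $\I_0$ and $\Pee_0$ depend on $v$ only through $\norm{v}_{D^{1, 2}}^2$, $\norm{v}_{L^2}^2$, $\norm{v}_{L^p}^p$ and the nonlocal quartic term — each a continuous functional of $v$ multiplied by an explicit power of $t$. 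Everything else reduces to the elementary computation of $g'$ and $g''$ sketched above.
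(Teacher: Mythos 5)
Your proof of part (2) is correct, and it is the same computation the paper itself carries out for the manifolds $\NP_\beta$ in Lemma \ref{lem:lower-bound}: discard the nonnegative nonlocal term in $\Pee_0 \parens{v} = 0$, bound $\norm{v}_{H^1}^2$ by $\norm{v}_{L^p}^p$, and invoke the Sobolev embedding. Note that the paper gives no proof of Lemma \ref{lem:NP_0-is-natural-constraint} at all: both items are imported from \cite[Lemma 2.3]{azzolliniGroundStateSolutions2008}, so for part (2) you have supplied an argument where the paper offers only a citation.

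For part (1) your skeleton (fibering map $g$, the identity $g' \parens{1} = \Pee_0 \parens{v}$, the computation $g'' \parens{1} < 0$ after eliminating $\norm{v}_{L^p}^p$, and a Lagrange multiplier killed by pairing with the scaling direction) is the standard one and the algebra checks out. But the repair you propose for the difficulty you yourself flag does not work. The identities you actually need are $\angles{\Pee_0' \parens{v}, 2 v + x \cdot \nabla v} = g'' \parens{1}$ and $\angles{\I_0' \parens{v}, 2 v + x \cdot \nabla v} = g' \parens{1}$ \emph{at the particular constrained critical point} $v$; for a general $v \in H^1$ the left-hand sides are undefined, and approximating $v$ by $C_c^\infty$ functions does not help, because the Lagrange relation $\I_0' \parens{v} = \mu \Pee_0' \parens{v}$ holds only at $v$ itself and not along an approximating sequence — there is no continuous identity in $v$ to pass to the limit in. The way \cite{ruizSchrodingerPoissonEquation2006, azzolliniGroundStateSolutions2008} actually close this gap is different: from $\I_0' \parens{v} = \mu \Pee_0' \parens{v}$ one reads off that $v$ is a weak solution of a modified semilinear elliptic equation whose coefficients depend on $\mu$; weak solutions of that equation are regular enough (by bootstrap) to satisfy its own Pohožaev identity, proved by the usual cutoff argument rather than by testing against $x \cdot \nabla v$; combining that identity with the corresponding Nehari identity and with $\Pee_0 \parens{v} = 0$ forces $\mu = 0$, the degenerate case in which the coefficient of $\Delta v$ vanishes being excluded separately. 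The same device — assume $\Pee_0' \parens{v} = 0$, view this as an elliptic equation, and derive a contradiction from its Nehari and Pohožaev identities together with $\Pee_0 \parens{v} = 0$ — is what legitimizes calling $\NP_0$ a $C^1$ manifold in the first place. So: right strategy and correct computations, but the step you describe as the only real obstacle is indeed an obstacle, and the density argument you sketch does not remove it.
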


We will also use the fact that minimizing sequences of
$\I_0|_{\NP_0}$ induce a sequence of measures which falls on the compactness case in P.--L. Lions' \cite[Lemma I.1]{lionsConcentrationcompactnessPrincipleCalculus1984}.

\begin{lem}
[{\cite[Lemma 2.6]{azzolliniGroundStateSolutions2008}}]
\label{lem:compactness}
Suppose that $\parens{u_n}_{n \in \nat}$ is a minimizing sequence of
$\I_0|_{\NP_0}$ and given $n \in \nat$, $\mu_n$ denotes the measure which takes each Lebesgue-measurable set $\Omega$ to
\begin{multline*}
\mu_n \parens{\Omega}
:=
\\
:=
\int_\Omega
\frac{p - 3}{2 p - 3} \abs*{\nabla u_n \parens{x}}^2
+
\frac{p - 2}{2 p - 3} u_n \parens{x}^2
+
\frac{p - 2}{2 \parens{2 p - 3}}
\int
	\frac{u_n \parens{x}^2 u_n \parens{y}^2}{\abs{x - y}}
\dif y
\dif x.
\end{multline*}
It follows that there exists
$\set{\xi_n}_{n \in \nat} \subset \real^3$
for which we can associate each $\delta > 0$ with an $r_\delta > 0$ such that
$\mu_n \parens{B_{r_\delta} \parens{\xi_n}} \geq m_0 - \delta$
for every $n \in \nat$.
\end{lem}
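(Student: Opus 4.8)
The plan is to read the conclusion as the \emph{compactness} alternative in the trichotomy of \cite[Lemma I.1]{lionsConcentrationcompactnessPrincipleCalculus1984}, applied to $\parens{\mu_n}_{n \in \nat}$ with concentration level $m_0$; the whole proof then consists in checking the hypotheses of that lemma and excluding the vanishing and dichotomy alternatives.

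First I would record why $\parens{\mu_n}_{n \in \nat}$ fits the framework. Since $3 < p < 6$, each coefficient appearing in the density of $\mu_n$ is positive, so every $\mu_n$ is a non-negative finite Borel measure. Its density is precisely the integrand obtained by eliminating $\norm{u_n}_{L^p}^p$ from $\I_0 \parens{u_n}$ by means of the constraint $\Pee_0 \parens{u_n} = 0$; hence $\mu_n \parens{\real^3} = \I_0 \parens{u_n} \to m_0 > 0$. The same density controls $\frac{p - 3}{2 p - 3} \norm{\nabla u_n}_{L^2}^2 + \frac{p - 2}{2 p - 3} \norm{u_n}_{L^2}^2$ by $\mu_n \parens{\real^3}$, so $\parens{u_n}_{n \in \nat}$ is bounded in $H^1$. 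After rescaling $\mu_n$ by $m_0 / \mu_n \parens{\real^3} \to 1$ to normalise the total mass, Lions' lemma yields a subsequence and points $\set{\xi_n}_{n \in \nat} \subset \real^3$ realising exactly one of compactness, vanishing or dichotomy. The compactness alternative with concentration level $m_0$ is, word for word, the statement to be proved: to each $\delta > 0$ it attaches a radius $r_\delta > 0$ with $\mu_n \parens{B_{r_\delta} \parens{\xi_n}} \geq m_0 - \delta$ for all $n$.

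To rule out vanishing I would use that the density of $\mu_n$ dominates $\frac{p - 2}{2 p - 3} u_n^2$. If vanishing held, then $\sup_{y \in \real^3} \int_{B_R \parens{y}} u_n^2 \to 0$ for every $R > 0$; since $\parens{u_n}_{n \in \nat}$ is bounded in $H^1$, the standard vanishing lemma gives $u_n \to 0$ in $L^p$, which contradicts $\inf_{v \in \NP_0} \norm{v}_{L^p} > 0$ from Lemma \ref{lem:NP_0-is-natural-constraint} (2).

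The main obstacle is excluding dichotomy. Dichotomy would split $u_n$, up to an $H^1$-small remainder, into two pieces whose supports drift infinitely far apart and whose $\mu_n$-masses tend to $\alpha$ and $m_0 - \alpha$ for some $\alpha \in \ooi{0, m_0}$. Because $\abs{\cdot}^{- 1}$ decays, the Coulomb interaction between the two well-separated pieces is negligible, so both $\I_0$ and $\Pee_0$ become asymptotically additive along the splitting; in particular $\Pee_0$ of each piece tends to $0$. I would then rescale each non-trivial piece by the unique factor that places it on $\NP_0$ (as recalled in Section \ref{prelim}) and compare energies: the sought contradiction amounts to a strict subadditivity of the least-energy level $m_0$ with respect to this separation of mass, which is incompatible with $\I_0 \parens{u_n} \to m_0$. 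Proving this strict inequality — simultaneously controlling the two rescaling parameters and the vanishing of the nonlocal cross-term — is where the real work lies, and it is exactly the place where the specific scaling structure of $\NP_0$ (not merely the sign of the coefficients) is used. Once both vanishing and dichotomy are excluded, the compactness alternative remains, which is the assertion.
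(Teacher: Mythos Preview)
The paper does not give its own proof of this lemma: it is quoted verbatim from \cite[Lemma 2.6]{azzolliniGroundStateSolutions2008}, with the surrounding text only remarking that the conclusion is the compactness alternative in Lions' \cite[Lemma I.1]{lionsConcentrationcompactnessPrincipleCalculus1984}. Your sketch is therefore not competing against an argument in the present paper but rather reconstructing the cited one, and it does so faithfully: the identification $\mu_n\parens{\real^3} = \I_0\parens{u_n} \to m_0$ via the constraint $\Pee_0\parens{u_n}=0$, the exclusion of vanishing through Lemma~\ref{lem:NP_0-is-natural-constraint}~(2) combined with Lions' vanishing lemma, and the exclusion of dichotomy by exploiting the decay of the Coulomb cross-term and the scaling $t \mapsto t^2 v\parens{t\,\cdot}$ onto $\NP_0$ are exactly the steps in Azzollini--Pomponio's proof. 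Your outline is correct and aligned with the source; the only honest caveat, which you already flag, is that the dichotomy step requires care in showing that both pieces are nontrivial and that the rescaling factors stay controlled, but this is routine once the splitting is set up.
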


\section{Asymptotic profile of least energy solutions to \eqref{eqn:SBP}}
\label{asymptotic}

Let us develop the preliminary results needed to prove the theorem. We begin by obtaining an upper bound for $\limsup_{\beta \to 0} m_\beta$.

\begin{lem}
\label{lem:m0_geq_mbeta}
$\limsup_{\beta \to 0} m_\beta \leq m_0$.
\end{lem}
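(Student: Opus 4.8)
The plan is to test the characterization $m_\beta = \inf_{\NP_\beta} \I_\beta$ against a suitably rescaled copy of a least energy solution $v_0 \in \NP_0$ of the limit equation \eqref{intro:eqn:reduced_SP}, which is available by Azzollini \& Pomponio's result recalled in Section \ref{prelim}. Concretely, for each $\beta > 0$ let $\tau_\beta > 0$ be the unique number such that $\tau_\beta^2 v_0 \parens*{\tau_\beta \cdot} \in \NP_\beta$; then, by definition of $m_\beta$,
\[
m_\beta
\leq
\I_\beta \parens*{\tau_\beta^2 v_0 \parens*{\tau_\beta \cdot}}.
\]

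The key elementary observation is that $\K_s \parens{x} \leq 1 / \abs{x}$ for every $s > 0$ and $x \in \real^3 \setminus \set{0}$, which is immediate from $1 - e^{- \abs{x} / s} \leq 1$. Feeding this into the scaling formula for $\I_\beta$ recalled in Section \ref{prelim} — in which the parameter $\beta$ enters only through the kernel $\K_{t \beta}$, carried by a nonnegative coefficient — one gets the pointwise comparison
\[
\I_\beta \parens*{t^2 v_0 \parens*{t \cdot}}
\leq
\I_0 \parens*{t^2 v_0 \parens*{t \cdot}}
\qquad
\text{for all } t, \beta > 0,
\]
since the two functionals differ precisely in the (nonnegative) Coulomb-type term, which is the larger one for $\I_0$.

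To close the argument, I would use that, since $v_0 \in \NP_0$, the unique critical point of $t \mapsto \I_0 \parens*{t^2 v_0 \parens*{t \cdot}}$ is $\tau = 1$; moreover this critical point is a global maximum, because the map is positive for small $t > 0$, tends to $- \infty$ as $t \to \infty$ (as $2 p - 3 > 3$), and its derivative — which, by a routine computation, equals $t^{-1} \Pee_0 \parens*{t^2 v_0 \parens*{t \cdot}}$ — changes sign exactly once, as recorded for $\NP_0$ in Section \ref{prelim}. Hence $\sup_{t > 0} \I_0 \parens*{t^2 v_0 \parens*{t \cdot}} = \I_0 \parens{v_0} = m_0$, and chaining the three displayed estimates (at $t = \tau_\beta$ for the middle one) yields $m_\beta \leq m_0$ for every $\beta > 0$, which is even stronger than the asserted bound on $\limsup_{\beta \to 0} m_\beta$.

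I do not expect a genuine obstacle here: the whole proof rests on the monotonicity $\K_s \leq \abs{\cdot}^{-1}$, which collapses the energy comparison to a single line. Should one wish to avoid that shortcut, the alternative is to show that $\K_{t \beta} \parens{x} \to 1 / \abs{x}$ forces $\I_\beta \parens*{t^2 v_0 \parens*{t \cdot}} \to \I_0 \parens*{t^2 v_0 \parens*{t \cdot}}$ uniformly for $t$ in compact subsets of $\ooi{0, \infty}$ — the Coulomb-type integrand being dominated by $v_0 \parens{x}^2 v_0 \parens{y}^2 / \abs{x - y} \in L^1$ via Hardy--Littlewood--Sobolev — deduce $\tau_\beta \to 1$, and pass to the limit in $m_\beta \leq \I_\beta \parens*{\tau_\beta^2 v_0 \parens*{\tau_\beta \cdot}}$; this is the more standard but longer route to the same conclusion.
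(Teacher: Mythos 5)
Your argument is correct, and it is genuinely different from (and shorter than) the one in the paper. Both proofs test $m_\beta$ against the rescaled minimizer $\tau_\beta^2 v_0\parens{\tau_\beta\cdot}\in\NP_\beta$, but from there the paper takes the ``longer route'' you sketch at the end: it shows $\overline{t}_\beta\to 1$ via the Nehari--Pohožaev identities, proves that the exponential correction terms vanish by monotone convergence, and then passes to the limit in $\I_\beta\parens{\overline{t}_\beta^2 v_0\parens{\overline{t}_\beta\cdot}}\to\I_0\parens{v_0}=m_0$. Your shortcut replaces all of this with the pointwise bound $\K_s\leq\abs{\cdot}^{-1}$, which gives $\I_\beta\parens{w}\leq\I_0\parens{w}$ for every $w$ and every $\beta>0$, combined with the fact (recorded in Section \ref{prelim} via the sign of $\Pee_0$ along the fiber) that $t=1$ is the global maximum of $t\mapsto\I_0\parens{t^2 v_0\parens{t\cdot}}$; chaining $m_\beta\leq\I_\beta\parens{\tau_\beta^2 v_0\parens{\tau_\beta\cdot}}\leq\I_0\parens{\tau_\beta^2 v_0\parens{\tau_\beta\cdot}}\leq\I_0\parens{v_0}=m_0$ is airtight and needs no limit in $\beta$ at all. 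What your approach buys is a strictly stronger conclusion --- the uniform inequality $m_\beta\leq m_0$ for every $\beta>0$, not merely $\limsup_{\beta\to 0}m_\beta\leq m_0$ --- at essentially no cost; what the paper's computation buys is explicit control of $\overline{t}_\beta$ and of the error terms, of the kind reused later (e.g.\ in Lemma \ref{lem:t_beta-to-1}), though none of that is needed for this particular statement.
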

\begin{proof}
$v_0 \in \NP_0$, so
\[
\Pee_\beta \parens{v_0}
=
-
\frac{1}{4}
\int \int
	\parens*{
		\frac{3}{\abs{x - y}}
		+
		\frac{1}{\beta}	
	}
	e^{- \abs{x - y} / \beta} v_0 \parens{x}^2 v_0 \parens{y}^2
\dif x \dif y
<
0,
\]
so there exists a unique $\overline{t}_\beta \in \ooi{0, 1}$ such that
$\overline{t}_\beta^2 v_0 \parens{\overline{t}_\beta \cdot} \in \NP_\beta$, i.e.,
\begin{multline*}
\frac{3}{2} \overline{t}_\beta^3 \norm{v_0}_{D^{1, 2}}^2
+
\frac{1}{2} \overline{t}_\beta \norm{v_0}_{L^2}^2
+
\frac{3}{4} \overline{t}_\beta^3
\int \int
	\K_{\overline{t}_\beta \beta} \parens{x - y} v_0 \parens{x}^2 v_0 \parens{y}^2
\dif x \dif y
-
\\
-
\frac{\overline{t}_\beta^2}{4 \beta}
\int \int
	e^{- \abs{x - y} / \parens{\overline{t}_\beta \beta}}
	v_0 \parens{x}^2 v_0 \parens{y}^2
\dif x \dif y
=
\frac{2 p - 3}{p} \overline{t}_\beta^{2 p - 3} \norm{v_0}_{L^p}^p.
\end{multline*}
It follows from the inclusion $v_0 \in \NP_0$ that
\begin{multline}
\label{eqn:aux:1}
\frac{1}{2} \parens*{1 - \frac{1}{\bar{t}_\beta^2}} \norm{v_0}_{L^2}^2
+
\\
+
\frac{1}{4}
\int \int
	\parens*{
		\frac{3}{\abs{x - y}} + \frac{1}{\overline{t}_\beta \beta}
	}
	e^{- \abs{x - y} / \parens{\overline{t}_\beta \beta}}
	v_0 \parens{x}^2 v_0 \parens{y}^2
\dif x \dif y
=
\\
=
\frac{2 p - 3}{p}
\parens{1 - \overline{t}_\beta^{2 p - 6}}
\norm{v_0}_{L^p}^p.
\end{multline}

Let us show that
\begin{equation}
\label{eqn:integral-to-zero}
\int \int
	\parens*{
		\frac{3}{\abs{x - y}} + \frac{1}{\overline{t}_\beta \beta}
	}
	e^{- \abs{x - y} / \parens{\overline{t}_\beta \beta}}
	v_0 \parens{x}^2 v_0 \parens{y}^2
\dif x \dif y
\xrightarrow[\beta \to 0]{}
0.
\end{equation}
It suffices to prove that if $0 < \beta_n \to 0$ as $n \to \infty$, then, up to subsequence,
\[
\int \int
	\parens*{
		\frac{3}{\abs{x - y}} + \frac{1}{\overline{t}_{\beta_n} \beta_n}
	}
	e^{- \abs{x - y} / \parens{\overline{t}_{\beta_n} \beta_n}}
	v_0 \parens{x}^2 v_0 \parens{y}^2
\dif x \dif y
\xrightarrow[n \to \infty]{}
0.
\]
As $\lim_{n \to \infty} \overline{t}_{\beta_n} \beta_n = 0$, then, up to subsequence, $\parens{\bar{t}_{\beta_n} \beta_n}_{n \in \nat}$ is decreasing, so the limit follows from the Monotone Convergence Theorem.

We claim that $\lim_{\beta \to 0} \bar{t}_\beta = 1$. By contradiction, suppose that $0 < \beta_n \to 0$ as $n \to \infty$ and
$\alpha := \liminf_{n \to \infty} \bar{t}_{\beta_n} < 1$. In view of \eqref{eqn:aux:1} and \eqref{eqn:integral-to-zero}, it follows that
\[
0
>
\frac{1}{2} \parens*{
	1
	-
	\frac{1}{\alpha^2}
}
\norm{v_0}_{L^2}^2
=
\frac{2 p - 3}{p}
\parens*{1 - \limsup_{n \to \infty} \overline{t}_\beta^{2 p - 6}}
\norm{v_0}_{L^p}^p
\geq
0,
\]
which is absurd, hence the result.

In view of \eqref{eqn:integral-to-zero}, the limit
$\bar{t}_\beta \to 1$ as $\beta \to 0$ implies
\begin{multline*}
m_\beta
\leq
\I_\beta \parens*{\overline{t}_\beta^2 v_0 \parens{\overline{t}_\beta \cdot}}
=
\frac{p - 3}{2 p - 3} \overline{t}_\beta^3 \norm{v_0}_{D^{1, 2}}^2
+
\frac{p - 2}{2 p - 3} \overline{t}_\beta \norm{v_0}_{L^2}^2
+
\\
+
\frac{p - 3}{2 \parens{2 p - 3}}
\overline{t}_\beta^3
\int
	\parens{v_0^2 \ast \K_{\overline{t}_\beta \beta}}
	\parens{x} v_0 \parens{x}^2
\dif x
+
\\
+
\frac{\overline{t}_\beta^2}{4 \parens{2 p - 3} \beta}
\int \int
	e^{- \abs{x - y} / \parens{\overline{t}_\beta \beta}}
	v_0 \parens{x}^2 v_0 \parens{y}^2
\dif x \dif y
\xrightarrow[\beta \to 0]{}
\I_0 \parens{v_0} = m_0,
\end{multline*}
and the lemma is proved.
\end{proof}

We can use the previous lemma to control the $H^1$-norm of least energy solutions to \eqref{eqn:SBP} for sufficiently small
$\beta$.

\begin{lem}
\label{lem:upper-bound}
$\limsup_{\beta \to 0} \norm{v_\beta}_{H^1} < \infty$.
\end{lem}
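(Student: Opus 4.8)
The plan is to combine the estimate $\limsup_{\beta\to 0} m_\beta \le m_0$ from Lemma~\ref{lem:m0_geq_mbeta} (together with $m_0 < \infty$) with a coercivity estimate for $\I_\beta$ on the manifold $\NP_\beta$. Since $v_\beta$ is a least energy solution to \eqref{eqn:SBP}, it is a critical point of $\I_\beta$, hence $v_\beta \in \NP_\beta$, and moreover $\I_\beta \parens{v_\beta} = m_\beta$ (the minimizer of $\I_\beta|_{\NP_\beta}$ is a least energy solution, so the least energy value coincides with $m_\beta$). It therefore suffices to exhibit a constant $c_p > 0$, depending only on $p$, such that $\I_\beta \parens{v} \ge c_p \norm{v}_{H^1}^2$ for every $v \in \NP_\beta$ and every $\beta > 0$.

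To obtain such an estimate, I would use the defining identity $\Pee_\beta \parens{v} = 0$ of $\NP_\beta$ to eliminate the sign-indefinite term $\norm{v}_{L^p}^p$ from the energy: for $v \in \NP_\beta$ one has $\I_\beta \parens{v} = \I_\beta \parens{v} - \frac{1}{2p - 3} \Pee_\beta \parens{v}$, and a direct computation — essentially the one already implicit in the proof of Lemma~\ref{lem:m0_geq_mbeta} — gives
\begin{multline*}
\I_\beta \parens{v}
=
\frac{p - 3}{2p - 3} \norm{v}_{D^{1, 2}}^2
+
\frac{p - 2}{2p - 3} \norm{v}_{L^2}^2
+
\frac{p - 3}{2 \parens{2p - 3}}
\int \parens{v^2 \ast \K_\beta} \parens{x} \, v \parens{x}^2 \, \dif x
+
\\
+
\frac{1}{4 \parens{2p - 3} \beta}
\int \int e^{- \abs{x - y} / \beta} v \parens{x}^2 v \parens{y}^2 \, \dif x \, \dif y.
\end{multline*}
Since $3 < p < 6$, all four coefficients are strictly positive, and since $\K_\beta \ge 0$ the two integrals are non-negative; using $\frac{p-3}{2p-3} \le \frac{p-2}{2p-3}$ one concludes $\I_\beta \parens{v} \ge \frac{p-3}{2p-3} \norm{v}_{H^1}^2$, uniformly in $\beta$. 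This is the claimed estimate with $c_p = \frac{p-3}{2p-3}$.

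Combining the two ingredients, $\norm{v_\beta}_{H^1}^2 \le \frac{2p-3}{p-3}\, m_\beta$, so passing to $\limsup_{\beta\to 0}$ and invoking Lemma~\ref{lem:m0_geq_mbeta} yields $\limsup_{\beta\to 0} \norm{v_\beta}_{H^1}^2 \le \frac{2p-3}{p-3}\, m_0 < \infty$. I do not expect a genuine obstacle here: the only point needing (mild) care is the algebra confirming that the coefficient of $\norm{v}_{D^{1,2}}^2$ in the reduced functional is positive, which is precisely where $p > 3$ is used; the finiteness of $m_0$ and the upper bound on $\limsup_{\beta\to 0} m_\beta$ are already available.
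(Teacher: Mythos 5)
Your proposal is correct and follows the same route as the paper: using $\Pee_\beta(v_\beta)=0$ to rewrite $\I_\beta(v_\beta)=m_\beta$ as a sum of nonnegative terms dominating $\frac{p-3}{2p-3}\norm{v_\beta}_{H^1}^2$, and then invoking Lemma~\ref{lem:m0_geq_mbeta}. The algebra eliminating the $L^p$ term and the resulting coefficients match the paper's computation exactly.
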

\begin{proof}
As $v_\beta \in \NP_\beta$, we obtain
\begin{multline*}
m_\beta
=
\I_\beta \parens{v_\beta}
=
\frac{p - 3}{2 p - 3}
\norm{v_\beta}_{D^{1, 2}}^2
+
\frac{p - 2}{2 p - 3}
\norm{v_\beta}_{L^2}^2
+
\\
+
\frac{p - 3}{2 \parens{2 p - 3}}
\int \int
	\K_\beta \parens{x - y} v_\beta \parens{x}^2 v_\beta \parens{y}^2
\dif x \dif y
+
\\
+
\frac{1}{4 \parens{2 p - 3}}
\int \int
	\frac{e^{- \abs{x - y} / \beta}}{\beta}
	v_\beta \parens{x}^2 v_\beta \parens{y}^2
\dif x \dif y,
\end{multline*}
so
$
m_\beta \geq \parens{p - 3} \norm{v_\beta}_{H^1}^2 / \parens{2 p - 3}
$
and the result follows from Lemma \ref{lem:m0_geq_mbeta}.
\end{proof}

The following inequality follows from an application of Hölder's and Young's inequalities.
\begin{lem}
\label{lem:inequality}
Given $w \in L^4$, it holds that
\[
\int \int
	\parens*{
		\frac{3}{\abs{x - y}} + \frac{1}{\beta}
	}
	e^{- \abs{x - y} / \beta} w \parens{x}^2 w \parens{y}^2
\dif x \dif y
\leq
20 \pi \beta^2 \norm{w}_{L^4}^4.
\]
It follows that if $\set{w_\beta}_{\beta > 0} \subset H^1$ is such that $\limsup_{\beta \to 0} \norm{w_\beta}_{H^1} < \infty$, then
\[
\int \int
	\parens*{
		\frac{3}{\abs{x - y}} + \frac{1}{\beta}
	}
	e^{- \abs{x - y} / \beta}
	w_\beta \parens{x}^2 w_\beta \parens{y}^2
\dif x \dif y
\xrightarrow[\beta \to 0]{}
0.
\]
\end{lem}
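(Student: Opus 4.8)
\textit{Proof proposal.} The plan is to rewrite the double integral as the $L^2$-pairing $\int \parens{w^2 \ast f_\beta} \parens{x} w \parens{x}^2 \dif x$, where $f_\beta \colon \real^3 \setminus \set{0} \to \real$ is the nonnegative kernel $f_\beta \parens{z} := \parens{3 / \abs{z} + 1 / \beta} e^{- \abs{z} / \beta}$, and then to bound this pairing by a combination of Hölder's and Young's inequalities. Concretely, Hölder's inequality with exponents $2$ and $2$ gives
\[
\int
	\parens{w^2 \ast f_\beta} \parens{x} w \parens{x}^2
\dif x
\leq
\norm{w^2 \ast f_\beta}_{L^2} \norm{w^2}_{L^2},
\]
while Young's convolution inequality in the form $\norm{g \ast h}_{L^2} \leq \norm{g}_{L^1} \norm{h}_{L^2}$ yields $\norm{w^2 \ast f_\beta}_{L^2} \leq \norm{f_\beta}_{L^1} \norm{w^2}_{L^2}$. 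Since $\norm{w^2}_{L^2} = \norm{w}_{L^4}^2$, these two estimates combine to the upper bound $\norm{f_\beta}_{L^1} \norm{w}_{L^4}^4$; note that this only uses $w \in L^4$.

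It then remains to compute $\norm{f_\beta}_{L^1}$. Passing to spherical coordinates and substituting $r = \beta s$,
\[
\norm{f_\beta}_{L^1}
=
4 \pi
\int_0^\infty
	\parens*{\frac{3}{r} + \frac{1}{\beta}}
	e^{- r / \beta} r^2
\dif r
=
4 \pi \beta^2
\int_0^\infty
	\parens{3 s + s^2} e^{- s}
\dif s
=
4 \pi \beta^2 \parens{3 + 2}
=
20 \pi \beta^2,
\]
where I use $\int_0^\infty s e^{- s} \dif s = 1$ and $\int_0^\infty s^2 e^{- s} \dif s = 2$. This establishes the claimed inequality.

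For the second assertion, I would invoke the continuous Sobolev embedding $H^1 \hookrightarrow L^4$ (valid since $4$ lies between $2$ and the critical exponent $6$): it shows that $\limsup_{\beta \to 0} \norm{w_\beta}_{L^4} < \infty$ whenever $\limsup_{\beta \to 0} \norm{w_\beta}_{H^1} < \infty$. Combining this with the inequality just proved, the left-hand side is at most $20 \pi \beta^2 \norm{w_\beta}_{L^4}^4$, which tends to $0$ as $\beta \to 0$.

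I do not expect any serious obstacle here: the estimate is a routine Hölder--Young argument, and the only points deserving attention are the explicit evaluation of $\norm{f_\beta}_{L^1}$ that produces the constant $20 \pi$, and the observation that the first inequality requires only $w \in L^4$ (rather than the full strength $w \in H^1$), the Sobolev embedding being used solely to pass from the $H^1$-bound to the $L^4$-bound in the second part.
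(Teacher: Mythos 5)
Your proof is correct and follows exactly the route the paper indicates (it only remarks that the inequality ``follows from an application of Hölder's and Young's inequalities'' without giving details): Hölder's inequality with exponents $2$ and $2$, Young's convolution inequality $\norm{w^2 \ast f_\beta}_{L^2} \leq \norm{f_\beta}_{L^1} \norm{w^2}_{L^2}$, and the explicit computation $\norm{f_\beta}_{L^1} = 20 \pi \beta^2$, which you carry out correctly. The second assertion via the Sobolev embedding $H^1 \hookrightarrow L^4$ is also exactly as intended.
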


Let us show that the family of Nehari--Pohožaev manifolds
$\parens{\NP_\beta}_{\beta > 0}$ is bounded away from zero in $L^p$.

\begin{lem}
\label{lem:lower-bound}
$\inf_{\beta > 0} \set{\norm{v}_{L^p} : v \in \NP_\beta} > 0$.
\end{lem}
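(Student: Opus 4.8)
The plan is to combine the positivity of the nonlocal part of $\Pee_\beta$ with the Sobolev embedding $H^1 \hookrightarrow L^p$, so that the constraint $\Pee_\beta \parens{v} = 0$ forces a uniform lower bound first for $\norm{v}_{H^1}$ and then for $\norm{v}_{L^p}$.

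The first thing I would establish is the elementary pointwise estimate: for $x \neq y$ and $\beta > 0$,
\[
\frac{3}{4} \K_\beta \parens{x - y} - \frac{1}{4 \beta} e^{- \abs{x - y} / \beta}
=
\frac{1}{4 \abs{x - y}}
\parens*{
	3 - \parens*{3 + \frac{\abs{x - y}}{\beta}} e^{- \abs{x - y} / \beta}
}
\geq
0,
\]
which follows by setting $s := \abs{x - y} / \beta$ and noting that $3 e^s \geq 3 \parens{1 + s} \geq 3 + s$ for $s \geq 0$. Integrating this inequality against $v \parens{x}^2 v \parens{y}^2$ shows that the nonlocal part of $\Pee_\beta$ is nonnegative for every $v \in H^1$; since also $\tfrac{3}{2} \norm{v}_{D^{1, 2}}^2 + \tfrac{1}{2} \norm{v}_{L^2}^2 \geq \tfrac{1}{2} \norm{v}_{H^1}^2$, this yields
\[
\Pee_\beta \parens{v} \geq \frac{1}{2} \norm{v}_{H^1}^2 - \frac{2 p - 3}{p} \norm{v}_{L^p}^p
\]
for every $v \in H^1$ and every $\beta > 0$.

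Next I would take $v \in \NP_\beta$, so that $\Pee_\beta \parens{v} = 0$ and hence $\norm{v}_{H^1}^2 \leq \tfrac{2 \parens{2 p - 3}}{p} \norm{v}_{L^p}^p$. Since $3 < p < 6$, the Sobolev embedding provides a constant $C_S > 0$ --- independent of $v$ and of $\beta$ --- with $\norm{v}_{L^p} \leq C_S \norm{v}_{H^1}$, so that $\norm{v}_{H^1}^2 \leq \tfrac{2 \parens{2 p - 3}}{p} C_S^p \norm{v}_{H^1}^p$; dividing by $\norm{v}_{H^1}^2 > 0$ gives $\norm{v}_{H^1} \geq c_0$ for a constant $c_0 = c_0 \parens{p} > 0$. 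Plugging this back into $\norm{v}_{L^p}^p \geq \tfrac{p}{2 \parens{2 p - 3}} \norm{v}_{H^1}^2$ produces $\norm{v}_{L^p} \geq \parens*{\tfrac{p}{2 \parens{2 p - 3}} c_0^2}^{1 / p} > 0$, a bound that depends only on $p$, which is the assertion.

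All of this is short, and the only step involving a genuine (if minor) computation is the pointwise kernel inequality of the second paragraph; it is there that the precise form of the Bopp--Podolsky kernel matters, ensuring that nothing is lost on the coercive side when one passes from the Euler--Lagrange equation $\I_\beta' \parens{v} = 0$ to the constraint $\Pee_\beta \parens{v} = 0$. I do not foresee any further difficulty.
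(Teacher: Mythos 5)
Your proof is correct and follows essentially the same route as the paper: the paper also shows that the combined nonlocal kernel $\tfrac{3}{4}\K_\beta - \tfrac{1}{4\beta}e^{-\abs{\cdot}/\beta}$ is nonnegative (via the equivalent inequality $r e^{-r} \leq 1 - e^{-r}$), deduces $0 = \Pee_\beta\parens{v} \geq \tfrac{1}{2}\norm{v}_{H^1}^2 - \tfrac{2p-3}{p}\norm{v}_{L^p}^p$, obtains the uniform $H^1$ lower bound on $\NP_\beta$ from the Sobolev embedding, and then feeds it back into the same inequality to bound $\norm{v}_{L^p}$ from below.
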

\begin{proof}
We claim that
\begin{equation}
\label{eqn:aux:3}
\inf_{\beta > 0} \set*{\norm{v}_{H^1} : v \in \NP_\beta} > 0.
\end{equation}
Indeed, the elementary inequality $r e^{-r} \leq 1 - e^{- r}$ for every $r \geq 0$ implies
\begin{equation}
\label{eqn:aux:2}
0
=
\Pee_\beta \parens{v}
\geq
\frac{1}{2} \norm{v}_{H^1}^2 - \frac{2 p - 3}{p} \norm{v}_{L^p}^p,
\end{equation}
and thus $\parens{2 p - 3} c \norm{v}_{H^1}^{p - 2} / p \geq 1 / 2$, where $c > 0$ denotes the constant of the Sobolev embedding $H^1 \hookrightarrow L^p$.

In this situation, the lemma follows from \eqref{eqn:aux:3} and \eqref{eqn:aux:2}.
\end{proof}

The inclusion $v_\beta \in \NP_\beta$ implies
\[
\Pee_0 \parens{v_\beta}
=
\frac{1}{4}
\int \int
	\parens*{
		\frac{3}{\abs{x - y}}
		+
		\frac{1}{\beta}
	}
	e^{- \abs{x - y} / \beta}
	v_\beta \parens{x}^2 v_\beta \parens{y}^2
\dif x \dif y
>
0,
\]
so there exists a unique $t_\beta > 1$ such that
$t_\beta^2 v_\beta \parens{t_\beta \cdot} \in \NP_0$, i.e.,
\begin{multline}
\label{eqn:v_beta-in-NP_0}
\frac{3}{2} t_\beta^3 \norm{v_\beta}_{D^{1, 2}}^2
+
\frac{1}{2} t_\beta \norm{v_\beta}_{L^2}^2
+
\frac{3}{4} t_\beta^3
\int \int
	\frac{v_\beta \parens{x}^2 v_\beta \parens{y}^2}{\abs{x - y}}
\dif x \dif y
=
\\
=
\frac{2 p - 3}{p} t_\beta^{2 p - 3} \norm{v_\beta}_{L^p}^p.
\end{multline}
Our last preliminary result shows that $t_\beta \to 1$ as
$\beta \to 0$.

\begin{lem}
\label{lem:t_beta-to-1}
$t_\beta \to 1$ and
$\I_0 \parens{t_\beta^2 v_\beta \parens{t_\beta \cdot}} \to m_0$
as $\beta \to 0$.
\end{lem}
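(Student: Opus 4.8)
The plan is to fix a sequence $\beta_n \to 0$ and track the four scalars $A_n := \norm{v_{\beta_n}}_{D^{1,2}}^2$, $B_n := \norm{v_{\beta_n}}_{L^2}^2$, $C_n := \int \int v_{\beta_n}\parens{x}^2 v_{\beta_n}\parens{y}^2 \abs{x-y}^{-1} \dif x \dif y$ and $D_n := \norm{v_{\beta_n}}_{L^p}^p$. First I would note that $\parens{A_n}$, $\parens{B_n}$ are bounded by Lemma \ref{lem:upper-bound}, that $\parens{D_n}$ is then bounded by the Sobolev embedding $H^1 \hookrightarrow L^p$ (where $p < 6$ is used) and $\parens{C_n}$ by the Hardy--Littlewood--Sobolev inequality, and that $A_n + B_n = \norm{v_{\beta_n}}_{H^1}^2$ is bounded below by \eqref{eqn:aux:3} and $D_n$ by Lemma \ref{lem:lower-bound}. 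Passing to a subsequence I may assume $A_n \to A_\infty$, $B_n \to B_\infty$, $C_n \to C_\infty$, $D_n \to D_\infty$ with $A_\infty + B_\infty > 0$ and $D_\infty > 0$. Since $v_{\beta_n} \in \NP_{\beta_n}$, the identity for $\Pee_0\parens{v_\beta}$ displayed just before the lemma together with Lemma \ref{lem:inequality} gives $\Pee_0\parens{v_{\beta_n}} \to 0$, that is, $\frac{3}{2} A_n + \frac{1}{2} B_n + \frac{3}{4} C_n - \frac{2p-3}{p} D_n \to 0$.

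The heart of the argument is to show $t_{\beta_n} \to 1$. Dividing the defining identity \eqref{eqn:v_beta-in-NP_0} of $t_{\beta_n}$ by $t_{\beta_n}^3$ yields $\frac{3}{2} A_n + \frac{1}{2 t_{\beta_n}^2} B_n + \frac{3}{4} C_n = \frac{2p-3}{p} t_{\beta_n}^{2p-6} D_n$; the left side is bounded, so $t_{\beta_n}^{2p-6} D_n$ is bounded, whence $\parens{t_{\beta_n}}$ is bounded because $2p - 6 > 0$ (this uses $p > 3$) and $D_\infty > 0$. After a further subsequence, $t_{\beta_n} \to t_\infty \in \coi{1, \infty}$. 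Dividing \eqref{eqn:v_beta-in-NP_0} by $t_{\beta_n}$ and letting $n \to \infty$ gives $\frac{3}{2} t_\infty^2 A_\infty + \frac{1}{2} B_\infty + \frac{3}{4} t_\infty^2 C_\infty = \frac{2p-3}{p} t_\infty^{2p-4} D_\infty$, while the limit of $\Pee_0\parens{v_{\beta_n}}$ gives the same relation with $t_\infty$ replaced by $1$. Dividing the first identity by $t_\infty^{2p-4}$, one sees that the function $t \mapsto \frac{3}{2} t^{6-2p} A_\infty + \frac{1}{2} t^{4-2p} B_\infty + \frac{3}{4} t^{6-2p} C_\infty$ attains the value $\frac{2p-3}{p} D_\infty$ at both $t = 1$ and $t = t_\infty$; since $p > 3$ makes the exponents $6-2p$ and $4-2p$ negative and $A_\infty + B_\infty > 0$, this function is strictly decreasing, so $t_\infty = 1$. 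As the initial sequence was arbitrary, $t_\beta \to 1$.

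For the convergence of the energy, put $w_\beta := t_\beta^2 v_\beta\parens{t_\beta \cdot} \in \NP_0$ and write $A_\beta$, $B_\beta$, $C_\beta$ for the quantities above with subscript $\beta$. Eliminating $\norm{w_\beta}_{L^p}^p$ from $\I_0\parens{w_\beta}$ via $\Pee_0\parens{w_\beta} = 0$ and using the scaling laws yields $\I_0\parens{w_\beta} = \frac{p-3}{2p-3} t_\beta^3 A_\beta + \frac{p-2}{2p-3} t_\beta B_\beta + \frac{p-3}{2\parens{2p-3}} t_\beta^3 C_\beta$. On the other hand, the expression for $m_\beta = \I_\beta\parens{v_\beta}$ recalled in the proof of Lemma \ref{lem:upper-bound}, combined with the identity $\int \int \K_\beta\parens{x-y} v_\beta\parens{x}^2 v_\beta\parens{y}^2 \dif x \dif y = C_\beta - \int \int e^{-\abs{x-y}/\beta} \abs{x-y}^{-1} v_\beta\parens{x}^2 v_\beta\parens{y}^2 \dif x \dif y$ and Lemma \ref{lem:inequality}, gives $m_\beta = \frac{p-3}{2p-3} A_\beta + \frac{p-2}{2p-3} B_\beta + \frac{p-3}{2\parens{2p-3}} C_\beta + o\parens{1}$ as $\beta \to 0$. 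Since $t_\beta \to 1$ and $\parens{A_\beta}$, $\parens{B_\beta}$, $\parens{C_\beta}$ are bounded for small $\beta$, comparing the two displays gives $\I_0\parens{w_\beta} = m_\beta + o\parens{1}$; together with $\I_0\parens{w_\beta} \geq m_0$ (because $w_\beta \in \NP_0$) and $\limsup_{\beta \to 0} m_\beta \leq m_0$ from Lemma \ref{lem:m0_geq_mbeta}, this forces $\I_0\parens{w_\beta} \to m_0$.

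I expect the step proving $t_\beta \to 1$ to be the main obstacle, since it requires ruling out $t_\beta \to \infty$ and then pinning down the subsequential limit through a monotonicity argument; this is also where the restriction $3 < p < 6$ is genuinely used --- $p > 3$ through $2p - 6 > 0$ and the negativity of $6 - 2p$ and $4 - 2p$, and $p < 6$ through the embedding $H^1 \hookrightarrow L^p$ that bounds $\parens{D_n}$.
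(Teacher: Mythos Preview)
Your proof is correct and follows essentially the same approach as the paper: both compare the $\NP_0$ identity \eqref{eqn:v_beta-in-NP_0} with the $\NP_{\beta}$ identity (using Lemma~\ref{lem:inequality} to kill the exponential terms and Lemmas~\ref{lem:upper-bound}, \ref{lem:lower-bound} for the needed bounds), and both deduce $\I_0\parens{w_\beta} = m_\beta + o\parens{1}$ and sandwich with $m_0 \leq \I_0\parens{w_\beta}$ and Lemma~\ref{lem:m0_geq_mbeta}. The only cosmetic difference is that for $t_\beta \to 1$ the paper argues by direct contradiction on $\alpha := \liminf t_{\beta_n} > 1$ without first extracting convergent subsequences of $A_n, B_n, C_n, D_n$, whereas you pass to subsequential limits and use strict monotonicity of $t \mapsto \tfrac{3}{2}t^{6-2p}A_\infty + \tfrac{1}{2}t^{4-2p}B_\infty + \tfrac{3}{4}t^{6-2p}C_\infty$; both arrive at the same place.
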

\begin{proof}
Let us prove that $t_\beta \to 1$ as $\beta \to 0$. We only have to show that $\limsup_{\beta \to 0} t_\beta \leq 1$. By contradiction, suppose that $\limsup_{\beta \to 0} t_\beta > 1$. In particular, we can fix $\set{\beta_n}_{n \in \nat} \subset \ooi{0, \infty}$ such that
$\beta_n \to 0$ as $n \to \infty$ and
$\alpha := \liminf_{n \to \infty} t_{\beta_n} > 1$. It follows from \eqref{eqn:v_beta-in-NP_0} and from the fact that
$v_{\beta_n} \in \NP_{\beta_n}$ that
\begin{multline*}
\frac{1}{2}
\parens*{\frac{1}{t_{\beta_n}^2} - 1}
\norm{v_{\beta_n}}_{L^2}^2
+
\\
+
\frac{1}{4}
\int \int
	\parens*{
		\frac{3}{\abs{x - y}} + \frac{1}{{\beta_n}}
	}
	e^{- \abs{x - y} / {\beta_n}}
	v_{\beta_n} \parens{x}^2 v_{\beta_n} \parens{y}^2
\dif x \dif y
=
\\
=
\frac{2 p - 3}{p}
\parens{t_{\beta_n}^{2 p - 6} - 1}
\norm{v_{\beta_n}}_{L^p}^p.
\end{multline*}
In view of Lemmas \ref{lem:upper-bound}--\ref{lem:lower-bound},
\begin{multline*}
0
\geq
\frac{1}{2}
\parens*{\frac{1}{\alpha^2} - 1}
\parens*{\limsup_{n \to \infty} \norm{v_{\beta_n}}_{L^2}^2}
\geq
\\
\geq
\frac{2 p - 3}{p}
\parens{\alpha^{2 p - 6} - 1}
\parens*{
	\liminf_{n \to \infty} \norm{v_{\beta_n}}_{L^p}^p
}
>
0,
\end{multline*}
which is absurd, hence the result.

Now, we want to show that
$\I_0 \parens{t_\beta^2 v_\beta \parens{t_\beta \cdot}} \to m_0$ as
$\beta \to 0$. We have
\begin{multline*}
\label{eqn:m_0-leq-m_beta}
m_0
\leq
\I_0 \parens*{t_\beta^2 v_\beta \parens{t_\beta \cdot}}
=
\frac{p - 3}{2 p - 3}
t_\beta^3
\norm{v_\beta}_{D^{1, 2}}^2
+
\frac{p - 2}{2 p - 3}
t_\beta
\norm{v_\beta}_{L^2}^2
+
\\
+
\frac{p - 3}{2 \parens{2 p - 3}} t_\beta^3
\int
	\frac{v_\beta \parens{x}^2 v_\beta \parens{y}^2}{\abs{x - y}}
\dif x
=
t_\beta^3 m_\beta
+
\frac{p - 2}{2 p - 3}
\parens{t_\beta - t_\beta^3}
\norm{v_\beta}_{L^2}^2
+
\\
+
\frac{p - 3}{2 \parens{2 p - 3}}
t_\beta^3
\int \int
	\frac{e^{- \abs{x - y} / \beta}}{\abs{x - y}}
	v_\beta \parens{x}^2 v_\beta \parens{y}^2
\dif x \dif y
-
\\
-
\frac{1}{4 \parens{2 p - 3}}
\overline{t}_\beta^3
\int \int
	e^{- \abs{x - y} / \beta}
	v_\beta \parens{x}^2 v_\beta \parens{y}^2
\dif x \dif y.
\end{multline*}
Due to the limit $\lim_{\beta \to 0} t_\beta = 1$, the result follows from Lemmas \ref{lem:m0_geq_mbeta}--\ref{lem:inequality}.
\end{proof}

Even though this limit will not be explicitly used to prove the theorem, we remark that Lemma \ref{lem:t_beta-to-1} implies
$m_\beta \to m_0$ as $\beta \to 0$ because
$\I_\beta \parens{v_\beta} = m_\beta$ by definition. Let us finally prove the theorem.

\begin{proof}[Proof of Theorem \ref{thm:asymptotic_profile}]
Due to Lemma \ref{lem:t_beta-to-1},
$
\parens{
	u_n := t_{\beta_n}^2 v_{\beta_n} \parens{t_{\beta_n} \cdot}
}_{n \in \nat}
$
is a minimizing sequence of $\I_0|_{\NP_0}$. Let $\mu_n$ denote the measure defined in Lemma \ref{lem:compactness} and let
$\set{\xi_n}_{n \in \nat} \subset \real^3$ be furnished by the same lemma. It follows from Lemmas \ref{lem:upper-bound} and \ref{lem:t_beta-to-1} that
$\set{w_n := u_n \parens{\cdot - \xi_n}}_{n \in \nat}$ is bounded in $H^1$, so there exists $\overline{v}_0 \in H^1$ such that, up to subsequence, $w_n \rightharpoonup \overline{v}_0$ in $H^1$ as
$n \to \infty$. Due to the Kondrakov Theorem, we can suppose further that $w_n \to \overline{v}_0$ a.e. as $n \to \infty$.

Now, we argue as in \cite[Proof of Theorem 1.1]{azzolliniGroundStateSolutions2008} to prove that
\begin{equation}
\label{eqn:convergence-in-L^q}
\norm{w_n - \overline{v}_0}_{L^q} \xrightarrow[n \to \infty]{L^q} 0
\quad \text{for every} \quad
q \in \coi{2, 6}.
\end{equation}
Due to Lemma \ref{lem:compactness},
$
\norm{w_n}_{H^1 \parens{\real^3 \setminus B_{r_\delta} \parens{0}}}^2
<
\delta
$
for every $n \in \nat$. Consider a fixed $\delta > 0$. Due to the Kondrakov Theorem and the fact that $\norm{\cdot}_{H^1}$ is weakly lower-semicontinuous, we obtain
\begin{align}
\label{eqn:ineq-delta}
\norm{w_n - \overline{v}_0}_{L^q}
&\leq
\norm{w_n - \overline{v}_0}_{L^q \parens*{B_{r_\delta} \parens{0}}}
+
\norm{w_n - \overline{v}_0}_{
	L^q \parens*{\real^3 \setminus B_{r_\delta} \parens{0}}
};\nonumber
\\
&\leq
\delta
+
C
\parens*{
	\norm{w_n}_{
		H^1 \parens*{\real^3 \setminus B_{r_\delta} \parens{0}}
	}
	+
	\norm{\overline{v}_0}_{
		H^1 \parens*{\real^3 \setminus B_{r_\delta} \parens{0}}
	}
};\nonumber
\\
&
\leq 3 \delta
\end{align}
\sloppy
for sufficiently large $n \in \nat$, where $C > 0$ denotes the constant of the Sobolev embedding $H^1 \hookrightarrow L^q$. The result then follows from the fact that given $\delta > 0$, there exists $n_\delta \in \nat$ such that \eqref{eqn:ineq-delta} holds for
$n \geq n_\delta$.

We claim that
\begin{multline}
\label{eqn:convergence-two}
\frac{p - 3}{2 p - 3} \norm{w_n}_{D^{1, 2}}^2
+
\frac{p - 2}{2 p - 3} \norm{w_n}_{L^2}^2
\xrightarrow[n \to \infty]{}
\\
\xrightarrow[n \to \infty]{}
\frac{p - 3}{2 p - 3} \norm{\overline{v}_0}_{D^{1, 2}}^2
+
\frac{p - 2}{2 p - 3} \norm{\overline{v}_0}_{L^2}^2.
\end{multline}
Indeed, in view of \eqref{eqn:convergence-in-L^q} and Lemma \ref{lem:NP_0-is-natural-constraint}, we deduce that
$\norm{\overline{v}_0}_{L^p} > 0$, so $\overline{v}_0 \not \equiv 0$.
Considering \eqref{eqn:convergence-in-L^q}, Lemmas \ref{lem:BL}, \ref{lem:t_beta-to-1} and the fact that $\norm{\cdot}_{D^{1, 2}}$ is weakly lower-semicontinuous, we obtain
$
\Pee_0 \parens{\overline{v}_0}
\leq
\liminf_{n \to \infty} \Pee_0 \parens{w_n}
=
0
$.
As $\overline{v}_0 \not \equiv 0$, we deduce that there exists a unique $t_0 \in \oci{0, 1}$ such that
$t_0^2 \overline{v}_0 \parens{t_0 \cdot} \in \NP_0$. We obtain
\begin{multline*}
m_0
\leq
\I_0 \parens*{t_0^2 \overline{v}_0 \parens{t_0 \cdot}}
=
\\
=
\frac{p - 3}{2 p - 3} t_0^3 \norm{\overline{v}_0}_{D^{1, 2}}^2
+
\frac{p - 2}{2 p - 3} t_0 \norm{\overline{v}_0}_{L^2}^2
+
\frac{p - 2}{2 \parens{2 p - 3}} t_0^3
\int
	\frac{
		\overline{v}_0 \parens{x}^2 \overline{v}_0 \parens{y}^2
	}{\abs{x - y}}
\dif y
\dif x
\leq
\\
\leq
\frac{p - 3}{2 p - 3} \norm{\overline{v}_0}_{D^{1, 2}}^2
+
\frac{p - 2}{2 p - 3} \norm{\overline{v}_0}_{L^2}^2
+
\frac{p - 2}{2 \parens{2 p - 3}}
\int
	\frac{
		\overline{v}_0 \parens{x}^2 \overline{v}_0 \parens{y}^2
	}{\abs{x - y}}
\dif y
\dif x
\leq
\\
\leq
\I_0 \parens{w_n} + o_n \parens{1}
\end{multline*}
for sufficiently large $n \in \nat$ and the result follows by taking the limit $n \to \infty$.

In view of \eqref{eqn:convergence-in-L^q} and \eqref{eqn:convergence-two}, we obtain $\norm{w_n - \overline{v}_0}_{H^1} \to 0$ as
$n \to \infty$, so $\overline{v}_0 \in \NP_0$ and
$\I_0 \parens{\overline{v}_0} = m_0$. Finally, the fact that
$\I_0' \parens{\overline{v}_0} = 0$ is a corollary of Lemma \ref{lem:NP_0-is-natural-constraint}.
\end{proof}

\sloppy
\printbibliography
\end{document}